\theoremstyle{plain}
\newtheorem{proposition}{Proposition}[section]
\newtheorem{theorem}[proposition]{Theorem}
\newtheorem{corollary}[proposition]{Corollary}
\newtheorem{lemma}[proposition]{Lemma}
\theoremstyle{definition}
\title{\bf Radial fields on the manifolds of symmetric positive definite matrices
	\medskip
}
\author[1,2]{Ha-Young Shin}
\affil[1]{
	Department of Statistics and Actuarial Science, Soongsil University
}
\affil[2]{
	Integrative Institute of Basic Sciences, Soongsil University
}
{
    \makeatletter
    \renewcommand\AB@affilsepx{: \protect\Affilfont}
    \makeatother

    \affil[ ]{Email}

    \makeatletter
    \renewcommand\AB@affilsepx{, \protect\Affilfont}
    \makeatother

    \affil[1,2]{hayoung.shin@gmail.com}
}
\date{}
\begin{document}
	\maketitle
	
	\begin{abstract}
On Hadamard manifolds, the radial fields, which are the negative gradients of the Busemann functions, can be used to designate a canonical sense of direction. This could have many potential applications to Hadamard manifold-valued data, for example in defining notions of quantiles or treatment effects. Some of the most commonly encountered Hadamard manifolds in statistics are the spaces of symmetric positive definite matrices, which are used in, for example, covariance matrix analysis and diffusion tensor imaging. Surprisingly, an expression for the radial fields on these manifolds is unavailable in the literature even though the issue arises quite naturally when studying the geometry of these spaces. This paper aims to fill this gap by deriving such an expression, and also demonstrates their smoothness.
\end{abstract}

\section{Introduction}

In a metric space $(M,d)$, two unit-speed geodesic rays $\gamma_1,\gamma_2:[0,\infty)\rightarrow M$ are called asymptotic if $d(\gamma_1(t),\gamma_2(t)), t\in[0,\infty)$, is bounded; in the rest of this paper, we will refer to the metric space $(M,d)$ as simply $M$, with its metric $d$ being implicit. One can form an equivalence relation among unit-speed geodesic rays in $M$ on the basis of their being asymptotic; the set of all resulting equivalence classes is called the boundary at infinity $\partial M$, not to be confused with the topological boundary. There is a class of metric spaces called Hadamard spaces, equivalently complete CAT(0) spaces or global non-positive curvature spaces, with the following property: in a Hadamard space $M$, for all $\xi\in \partial M$ and $x\in M$, there is a unique $\gamma\in\xi$ satisfying $\gamma(0)=x$; see Chapter II.8 of \cite{Bridson1999} for more information.

Hadamard spaces that are also Riemannian manifolds are called Hadamard manifolds, which can equivalently be characterized as complete, simply connected Riemannian manifolds whose sectional curvatures are non-positive. By the Cartan--Hadamard theorem, an $n$-dimensional Hadamard manifold $M$ is diffeomorphic to $\mathbb{R}^n$ via the exponential map $\exp_p:T_pM\cong\mathbb{R}^n\rightarrow M$ at any $p\in M$. For any $x\in M$ and $\xi\in \partial M$, denoting the unique member of $\xi$ originating at $x$ by $\gamma_x$, we can associate a unique unit vector $\xi_x:=\gamma_x'(0)$ in $T_xM$ with $\xi$. The vector fields on $M$ defined by $x\mapsto \xi_x$ for $\xi\in \partial M$, which have been called radial fields (\cite{Heintze1977}, \cite{Shcherbakov1983}), are also the negative gradients of the so-called Busemann functions $x\mapsto \lim_{t\rightarrow\infty} d(x,\gamma(t))-t$, where $\gamma$ is any member of $\xi$; thus the radial fields are normal to the level sets of these function, which are called horospheres or horocycles. Proposition 3.3(a) of \cite{Shin2023} shows that 
\begin{equation}\begin{aligned} \label{xi}
    \xi_x=\lim_{t\rightarrow\infty}\frac{\log_x(\gamma(t))}{d(x,\gamma(t))},
\end{aligned}\end{equation}
where $\gamma:[0,\infty)\rightarrow M$ is any member of the equivalence class $\xi$, and Proposition 5.1 in that paper presents an expression for the radial fields in hyperbolic spaces. A note about the notation in this paper: $\exp$ and $\log$ with subscripts denote the Riemannian exponential maps and their inverses, respectively, while $\exp$ and $\log$ without subscripts denote the usual exponential and logarithm for real and positive numbers and $\mathrm{Exp}$ and $\mathrm{Log}$ denote the matrix exponential and logarithm.

Radial fields can be used to define a canonical sense of direction on Hadamard manifolds. That is, we can talk about $\xi_x$ being the unit vector at $x$ ``in the direction of $\xi$''. This is canonical in the sense that it does not require arbitrary decisions. On the other hand, one might try to define direction using parallel transport, but this is problematic because parallel transport between two points depends on the path taken between those points. One way to deal with this might be to choose a base point and corresponding tangent space to which we transport vectors from other points, but in general, this choice of base point would be arbitrary.

Radial fields are interesting mathematical objects in their own right and as tools for studying the boundary at infinity, Busemann functions, and horospheres on Hadamard manifolds; beyond this, they also have many potential applications due to providing this sense of direction. As an example, consider the problem of defining quantiles for Hadamard space-valued data. In the univariate case, like means and medians, quantiles can be expressed as minimizers of loss functions; the $\alpha$-quantile, where $\alpha\in(0,1)$, of a real random variable $X$ can be defined as $\arg\min_{p\in\mathbb{R}}|X-p|\{(1-\alpha)I(X\leq p)+\alpha I(X>p)\}$. By making the transformation $u=2\alpha-1$, $2[|X-p|\{(1-\alpha)I(X\leq p)+\alpha I(X>p)\}]=|X-p|+u(X-p)$, and quantiles can be indexed by $u\in (-1,1)$, the open 1-dimensional ball of radius 1; thus \cite{Chaudhuri1996} defined the multivariate $u$-quantile, where $u$ is a fixed vector of norm less than 1, of a random $n$-vector $X$ to be $\arg\min_{p\in\mathbb{R}^n}\lVert X-p\rVert+\langle u,X-p\rangle$. By conceptualizing $u=\lVert u\rVert (u/\lVert u\rVert)$ (if $u\neq 0$) and $X-p$ as tangent vectors at $p$, \cite{Shin2023} generalized quantiles to Hadamard manifold-valued data by indexing these quantiles with some $(\beta,\xi)\in[0,1)\times \partial M$ and defining the $(\beta,\xi)$-quantile of an $M$-valued random element $X$ to be $\arg\min_{p\in M}d(p,X)+\langle \beta\xi_p,\log_p(X)\rangle$. As noted in \cite{Shin2025}, observing that $\lVert X-p\rVert+\langle u,X-p\rangle=\lVert p-X\rVert-\langle u,p-X\rangle$, we could alternatively generalize quantiles to Hadamard manifolds with $\arg\min_{p\in M}d(X,p)-\langle \beta\xi_x,\log_X(p)\rangle$. Other asymmetric loss functions, such as the expectile (\cite{Newey1987}, \cite{Hermann2018}) or M-quantile (\cite{Breckling1988}, \cite{Konen2022}) loss functions can analogously be generalized to Hadamard manifolds using radial fields.

Another example is in the area of causal inference. The most important parameter in causal inference is the average treatment effect (\textsc{ATE}) $E(r_T)-E(r_C)$, where $r_T$ and $r_C$ are random variables representing the potential outcomes for the same subject with and without treatment, respectively. Consider the space $[0,\infty)\times \partial M/\sim$, where the equivalence relation is defined by $(\beta,\xi)\sim(\beta',\xi')$ if $\beta=\beta'=0$ or $(\beta,\xi)=(\beta',\xi')$; this space is called the (Euclidean) cone over the boundary $\partial M$ (\cite{Bertrand2012}, \cite{Hirai2024}). Then on Hadamard manifolds, \cite{Dissertation} defines an \textsc{ATE} to be a $[(\beta,\xi)]\in [0,\infty)\times \partial M/\sim$ for which $\exp_{r_C}(\beta\xi_{r_C})$ and $r_T$ have the same Fr\'echet means; the quantile and median treatment effects could be defined analogously. \cite{Dissertation} also defines an individual treatment effect as that $[(\beta,\xi)]\in [0,\infty)\times \partial M/\sim$ for which $\exp_{r_C}(\beta\xi_{r_C})=r_T$ and subsequently defines the homogenous treatment effect model, a crucial model in traditional causal inference, as one in which the individual treatment effect is constant.

Many other statistical tools that use vectors could also be generalized to Hadamard manifold-valued data by using non-negative numbers and radial fields to define magnitudes and directions, respectively, but because the use of radial fields for statistical inference on Hadamard manifolds is a new area of research, much of this vast potential is yet unexplored.

An expression for the radial fields on the spaces of symmetric positive definite matrices specifically is needed because these are some of the most commonly encountered examples of Hadamard manifolds. Because the issue of radial fields naturally arise when studying the geometry of Hadamard manifolds, we were surprised to be unable to find such an expression in the literature. This paper aims to fill this gap.

Denote the space of real symmetric $m\times m$ matrices by $\mathcal{S}_m$ and the space of real symmetric positive-definite (SPD) $m\times m$ matrices by $\mathcal{P}_m$. The former is an $m(m+1)/2$-dimensional vector space, and the latter can be considered a $m(m+1)/2$-dimensional smooth manifold on which the tangent space at each point is isomorphic to $\mathcal{S}_m$. This manifold is typically equipped with one of a handful of different Riemannian metrics, such as the Log-Cholesky metric of \cite{Lin2019}, but the most commonly used is the so-called trace, or affine invariant, metric, defined at $x\in \mathcal{P}_m$ by
 \begin{align*}
     \langle v_1,v_2\rangle=\text{tr}(x^{-1}v_1x^{-1}v_2),
 \end{align*}
\sloppy where $v_1,v_2\in T_x\mathcal{P}_m\cong\mathcal{S}_m$. This Riemannian manifold is complete and simply connected with sectional curvatures in $[-1/2,0]$ (see Proposition I.1 of \cite{Criscitiello2020}); therefore, this is a Hadamard manifold.

These spaces have many uses, and often, data take values in them. For example, diffusion tensor imaging (DTI), first proposed by \cite{Basser1994}, is a methodology for modeling diffusion of water molecules in voxels of brain scans as $3\times 3$ SPD matrices lying in $\mathcal{P}_3$. Crucially, these spaces need to be studied because covariance matrices (and their inverses, precision matrices), among the central objects of study in statistics and probability, are SPD matrices. Covariance matrices can be random $\mathcal{P}_m$-valued objects in their own right, for example, as sample covariance matrices or as parameters in a Bayesian framework, in which case the assigned prior is most commonly the inverse-Wishart distribution (see, for instance, \cite{Lee2018}).

Our main contribution here is an expression for the radial fields on $\mathcal{P}_m$, which is much less forthcoming than in the case of hyperbolic space. We also demonstrate that the radial fields are smooth on $\mathcal{P}_m$, which is not known to be true in general on Hadamard manifolds.

For an example of an application of the results in this paper, see Figure \ref{fig}, which comes from \cite{Shin2025}. Because an SPD matrix is diagonalizable, its eigenvectors can be used to define the axes of an ellipsoid and its eigenvalues their lengths; thus DTI data can be visualized as ellipsoids. Pictured on the left are DTI data from the corpus callosum, a structure that connects the two hemispheres of the brain, and on the right, quantiles of this data set, which require the radial fields to be calculated as explained earlier; see \cite{Shin2025} for more details. \cite{Shin2023} and \cite{Shin2025} outline some applications for quantiles on Hadamard manifolds. For example, they explore quantile-based outlier detection, as well as quantile-based measures of dispersion (i.e. spread), skewness (i.e. asymmetry), kurtosis (i.e. tailedness) and spherical asymmetry. These values are calculated for the given DTI data in \cite{Shin2025}; if we can find associations between these distributional characteristics and other variables (such as age or conditions like Alzheimer's disease), these quantile-based measures could potentially be useful in diagnosis. \cite{Shin2025} also investigated the use of quantiles to increase the statistical power of permutation tests (compared to only using the mean/median) in detecting whether two Hadamard space-valued data sets come from the same underlying distribution. In the context of the given DTI data, this could be used, for example, to test whether diffusion in the corpus callosa of two subjects differ significantly. Figure \ref{fig} is not necessarily of much interest in and of itself; it is merely intended to provide a visualization of the quantiles of a DTI data set. However, images like Figure \ref{fig} could still be used to make a crude visual comparison of the dispersion or asymmetry of the DTI data for two different subjects.

\begin{figure}[!t]
	\centering
	\begin{subfigure}[t]{0.49\linewidth}
		\includegraphics[width=\linewidth]{./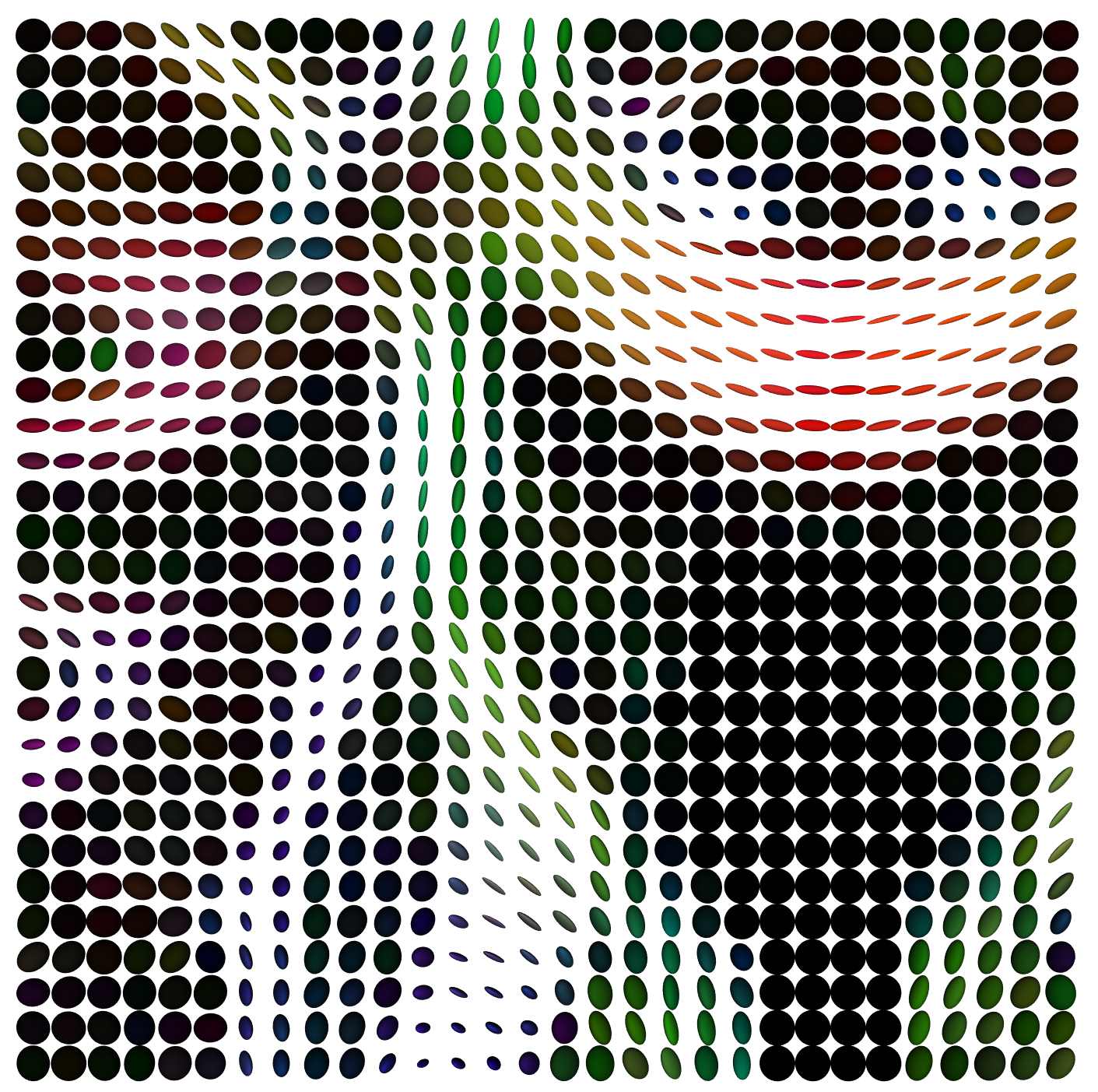}
  \label{dbparta}
	\end{subfigure}
	\begin{subfigure}[t]{0.45\linewidth}
		\includegraphics[width=\linewidth]{./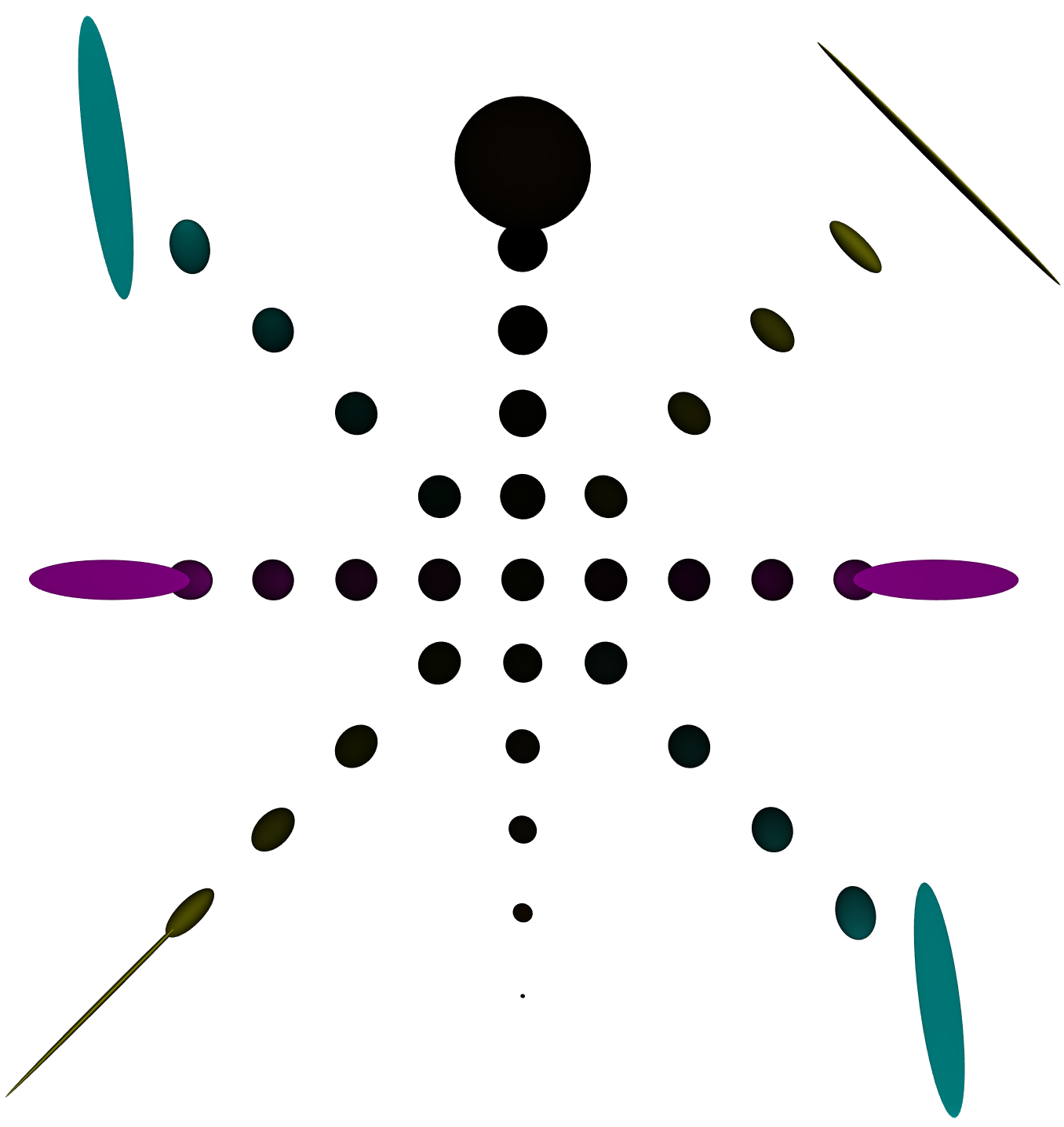}
  \label{dbpartb}
	\end{subfigure}
	\caption{Left: DTI data from the corpus callosum. Right: Quantiles of this data set corresponding to 8 different values of $\xi\in\partial \mathcal{P}_3$; the central ellipsoid represents the median ($\beta=0$), and moving outward in a given direction, the respective ellipsoids represent quantiles indexed by $\beta=0.2,0.4,0.6,0.8,0.98$ and a fixed $\xi$. These images are taken from \cite{Shin2025}, to which we refer the reader for more details.}
	\label{fig}
\end{figure}
 
	\section{Radial fields on $\mathcal{P}_m$}

Any $A\in \mathcal{S}_m$ has a real eigendecomposition $A=V\mathrm{diag}(d_1,\ldots,d_m)V^T$. Then, the matrix exponential of $A$ is SPD and can be written as $\mathrm{Exp}(A)=V\mathrm{diag}(\exp(d_1),\ldots,\exp(d_m))V^T$. Furthermore, if $A$ is also positive semidefinite, it has a unique real $t$th root that is symmetric positive semidefinite:
\begin{equation}\begin{aligned} \label{spdroot}
A^{1/t}=V\mathrm{diag}(d_1^{1/t},\ldots,d_m^{1/t})V^T.
\end{aligned}\end{equation}
Finally, if $A$ is SPD, then $A$ has a unique real SPD matrix logarithm 
\begin{equation}\begin{aligned} \label{spdlog}
\mathrm{Log}(A)=V\mathrm{diag}(\log(d_1),\ldots,\log(d_m))V^T.
\end{aligned}\end{equation}
In this section, all matrices for which we take $t$th roots are symmetric positive semidefinite, and those for which we take logarithms are SPD, so $A^{1/t}$ and $\mathrm{Log}(A)$ will specifically refer to the unique matrices mentioned above. 


The exponential maps and their inverses on $\mathcal{P}_m$ are respectively given by
  \begin{align*}
     \exp_x(v)&=x^{1/2}\mathrm{Exp}(x^{-1/2}vx^{-1/2})x^{1/2},\\
     \log_x(p)&=x^{1/2}\mathrm{Log}(x^{-1/2}px^{-1/2})x^{1/2},
\end{align*}
 where $x,p\in \mathcal{P}_m$ and $v\in T_x\mathcal{P}_m$ (see, for example, Section 3 of \cite{Sra2015}, 3.4 of \cite{Pennec2006}, 5 of \cite{Ferreira2006} or IV.A of \cite{Jaquier2017}), and therefore, the distance between $x$ and $p$ is 
     \begin{align*}
     d(x,p)=\lVert\mathrm{Log}(x^{-1/2}px^{-1/2})\rVert_F,
 \end{align*}
where $\lVert\cdot\rVert_F$ denotes the Frobenius norm. 

In the remainder of this paper, we will denote the Euclidean norm and inner product by $\lVert\cdot\rVert$ and $\langle\cdot,\cdot\rangle$, respectively.

First we state our main result, an expression for the radial field $x\mapsto\xi_x$.

\begin{theorem} \label{main}
    For any $p\in \mathcal{P}_m$ and unit vector $z\in T_p\mathcal{P}_m$, let $\xi$ be the unique point in $\partial\mathcal{P}_m$ satisfying $\xi_p=z$. Take any eigendecomposition $VDV^T$ of $p^{-1/2}\xi_p p^{-1/2}$ satisfying $d_1\geq\cdots\geq d_m$, where $D:=\mathrm{diag}(d_1,\ldots,d_m)$. 

    Take any $x\in \mathcal{P}_m$. Then, denoting the columns of the matrix $W:=x^{-1/2}p^{1/2}V$ by $w_1,\ldots,w_m$ so that $W=[w_1,\ldots,w_m]$, let $u_1,\ldots,u_m$ be an orthonormal basis of $\mathbb{R}^m$ that results from applying the Gram-Schmidt orthonormalization process to $w_1,\ldots,w_m$. Then $\xi_x=x^{1/2}UDU^Tx^{1/2}$, where $U:=[u_1,\ldots,u_m]$.
\end{theorem}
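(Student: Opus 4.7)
The plan is to apply (\ref{xi}) to the unit-speed geodesic ray $\gamma(t)=\exp_p(t\xi_p)$, which represents $\xi\in\partial\mathcal{P}_m$ by definition. The exponential-map formula together with the eigendecomposition $p^{-1/2}\xi_p p^{-1/2}=VDV^T$ gives $\gamma(t)=p^{1/2}V\Exp(tD)V^Tp^{1/2}$, so substituting into the formulas for $\log_x$ and for the Riemannian distance yields
\[
\log_x(\gamma(t))=x^{1/2}\Log(W\Exp(tD)W^T)x^{1/2},\qquad d(x,\gamma(t))=\bigl\lVert\Log(W\Exp(tD)W^T)\bigr\rVert_F,
\]
with $W=x^{-1/2}p^{1/2}V$. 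I would then identify the Gram--Schmidt procedure in the statement with the QR decomposition $W=UR$, where $R$ is upper triangular with positive diagonal. Using the identity $\Log(UAU^T)=U\Log(A)U^T$ for orthogonal $U$ and SPD $A$, together with the orthogonal invariance of the Frobenius norm, the two quantities above become $x^{1/2}U\Log(R\Exp(tD)R^T)U^Tx^{1/2}$ and $\lVert\Log(R\Exp(tD)R^T)\rVert_F$.

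At this point the theorem reduces to the single limit $\tfrac{1}{t}\Log(R\Exp(tD)R^T)\to D$ as $t\to\infty$. Given this limit, $\log_x(\gamma(t))/t$ tends to $x^{1/2}UDU^Tx^{1/2}$ and $d(x,\gamma(t))/t$ tends to $\lVert D\rVert_F$, which equals $1$ because $\xi_p$ is a unit tangent vector at $p$ and therefore $\lVert D\rVert_F^2=\mathrm{tr}(D^2)=\lVert\xi_p\rVert_p^2=1$. The formula for $\xi_x$ then follows directly from (\ref{xi}).

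For the limit itself, write $A(t):=R\Exp(tD)R^T$. Ostrowski's theorem on eigenvalues of congruences gives $\lambda_i(A(t))=\theta_i(t)e^{td_i}$ with $\theta_i(t)\in[\lambda_{\min}(RR^T),\lambda_{\max}(RR^T)]$, so $\log\lambda_i(A(t))/t\to d_i$. For the full matrix convergence I would group the $d_i$'s into their distinct values $d^{(1)}>\cdots>d^{(s)}$, let $P_j$ be the orthogonal projector onto $\mathrm{span}\{e_i:d_i=d^{(j)}\}$, and argue that the spectral projector $\pi_j(t)$ of $A(t)$ onto its $j$th eigenvalue cluster converges to $P_j$. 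The main tool is the upper triangularity of $R$: the rescaled matrix $\Exp(-tD/2)R\Exp(tD/2)$ is again upper triangular with entries $r_{ij}e^{t(d_j-d_i)/2}$, uniformly bounded and decaying to $0$ across every strict inequality $d_i>d_j$. This controls the resolvent of $A(t)$ on contours separating the exponentially well-spaced clusters and forces $\pi_j(t)\to P_j$, yielding $\Log(A(t))/t\to\sum_j d^{(j)}P_j=D$.

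I expect the projector convergence to be the main obstacle, particularly when $D$ has repeated eigenvalues, because within a cluster the individual eigenvectors of $A(t)$ can rotate and need not converge at all; only the cluster projector does. The cleanest implementation is a contour-integral representation $\pi_j(t)=\tfrac{1}{2\pi i}\oint_{\Gamma_j(t)}(zI-A(t))^{-1}\,dz$, with the resolvent bound supplied by the rescaled triangular matrix above. In the generic distinct-eigenvalue case the argument simplifies considerably: the eigenvectors themselves converge to signed standard basis vectors, and the spectral decomposition of $\Log(A(t))/t$ converges term by term to $D$.
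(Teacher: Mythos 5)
Your reduction is correct and your overall plan is sound, but it reaches the key limit by a genuinely different technical route than the paper. Both arguments start from (\ref{xi}) with $\gamma(t)=\exp_p(t\xi_p)$ and both exploit the flag structure produced by Gram--Schmidt; the difference is in the analytic core. You factor $W=UR$ (QR with $R$ upper triangular, positive diagonal), pull $U$ out of $\Log$ and the Frobenius norm, and reduce everything to the single matrix limit $\tfrac1t\Log(R\Exp(tD)R^T)\to D$, handling eigenvalues by Ostrowski's congruence theorem and eigenspaces by spectral projectors of the eigenvalue clusters via a contour-integral/resolvent argument driven by the boundedness and partial decay of $\Exp(-tD/2)R\Exp(tD/2)$; you also dispose of the denominator by computing $d(x,\gamma(t))/t\to\lVert D\rVert_F=1$ directly. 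The paper instead replaces the denominator by $t$ via the triangle inequality, works with the $t$-th root $H(t)=[W\Exp(tD)W^T]^{1/t}$, pins down its limiting eigenvalues $\exp(d_j)$ through Weyl's inequality and the minimax principle ((\ref{weyl})--(\ref{ineq})), and obtains the limiting eigenvectors by applying the Davis--Kahan $\sin\Theta$ theorem (\ref{davis}) to the cluster decomposition $A_s(t)+B_s(t)$, finally verifying $(\lim H(t))u_i=\exp(d_{n_s})u_i$ by an explicit expansion in the eigenbasis $y_l(t)$. Your QR reduction is arguably cleaner (it isolates the purely matrix-analytic statement $\tfrac1t\Log(R\Exp(tD)R^T)\to D$, which is exactly the paper's claim conjugated by $U$), and you correctly identify the real difficulty: with repeated $d_i$'s only cluster projectors, not individual eigenvectors, converge.

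One caution on the step you leave at sketch level, which is precisely the analogue of the paper's Davis--Kahan argument and is where the work lies. After rescaling by $e^{-td^{(j)}}$ for a middle cluster $j$, the blocks of $A(t)$ corresponding to higher clusters diverge, the coupling between cluster $j$ and those higher blocks is only \emph{bounded} (it does not vanish; e.g.\ the $(2,3)$-type products $r_{ij}r_{kl}$ survive the rescaling), and only the coupling to lower clusters tends to zero. So ``the resolvent is controlled on the contour'' is not enough by itself: uniform boundedness of $(zI-A(t))^{-1}$ on a fixed contour gives existence of $\pi_j(t)$ but not its convergence, since the rescaled matrix itself diverges. You need to show the resolvent \emph{converges} on the contour, e.g.\ by a block/Schur-complement computation in which the diverging upper block (whose smallest eigenvalue grows like $e^{t(d^{(j-1)}-d^{(j)})}$, using that the corresponding diagonal block of $R$ is upper triangular with fixed positive diagonal) kills the bounded coupling terms in the limit, leaving the resolvent of the limiting cluster-$j$ block $R_jR_j^T$ on the cluster-$j$ coordinates. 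With that supplied, $\pi_j(t)\to P_j$ and your argument closes; without it, the proof is incomplete at exactly the point the paper spends inequalities (\ref{bounds})--(\ref{vsi}) on.
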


Before proving this theorem, we will prove the following lemma, as well as a related proposition.

\begin{lemma}\label{mainlemma}
    For real $d_1\geq\ldots\geq d_{m}$ and a basis $w_1,\ldots,w_m$ of $\mathbb{R}^m$, define the matrix
\begin{align*}
H(t):=\bigg[\sum_{i=1}^me^{td_i}w_iw_i^T\bigg]^{1/t}. 
\end{align*}

(a) The $j$th largest eigenvalue of $H(t)$ converges to $\exp(d_j)$ $(j=1,\ldots,m)$.

    (b) Let $u_1,\ldots,u_m$ be an orthonormal basis of $\mathbb{R}^m$ that results from applying the Gram-Schmidt orthonormalization process to $w_1,\ldots,w_m$. Then $\lim_{t\rightarrow\infty} (H(t)u_j)=\exp(d_j)u_j$ $(j=1,\ldots,m)$.

    (c) $\lim_{t\rightarrow\infty}H(t)$ exists and has eigenvalues $\exp(d_1),\ldots,\exp(d_m)$ with corresponding eigenvectors $u_1,\ldots,u_m$.
\end{lemma}

The following paragraph applies to the proofs of all parts of this lemma. It will be left unstated that $t$ is restricted to the positive integers, $i$ and $j$ only take values in $\{1,\ldots,m\}$ and $s$ in $1,\ldots,S$, where $S-1$ is defined as the size of the set $\{j\mid d_j\neq d_{j+1}\}\subset\{1,\ldots,m-1\}$; define $n_1<\cdots<n_{S-1}$ to be the elements of this set, $n_0=0$ and $n_S=m$. For any $s$, define $A_s$, $B_s$ and $C_s$ by
\begin{equation*}\begin{aligned}
A_s(t)&:=\sum_{i=1}^{n_s}\exp(t(d_i-d_{n_s}))w_iw_i^T, \\
B_s(t)&:=\sum_{i=n_s+1}^m\exp(t(d_i-d_{n_s}))w_iw_i^T, \\
C_s&:=\sum_{i=1}^{n_s}w_iw_i^T.
\end{aligned}\end{equation*}
For any $j$, denote the $j$th largest eigenvalue of a matrix $A\in \mathcal{S}_m$ by $\alpha_j(A)$.

\begin{proof}[Proof of Lemma \ref{mainlemma}(a)]
Recall Weyl's inequality which states that for $A,B\in\mathcal{S}_m$, $\alpha_{i+j-1}(A+B)\leq\alpha_j(A)+\alpha_i(B)\leq\alpha_{i+j-m}(A+B)$; by letting $i=1$ and $m$, 
\begin{equation}\begin{aligned} \label{weyl}
\alpha_j(A)+\alpha_m(B)\leq\alpha_j(A+B)\leq\alpha_j(A)+\alpha_1(B).
\end{aligned}\end{equation}
Also recall the minimax principle (Section I.10 of \cite{Kato1995}) which states that for $A\in\mathcal{S}_m$, 
\begin{equation}\begin{aligned}\label{minimax}
\alpha_j(A)=\max_{\dim(\mathcal{T})=j}\min_{v\in \mathcal{T},\lVert v\rVert_2=1}v^TAv,
\end{aligned}\end{equation}
where $\mathcal{T}$ is a $j$-dimensional subspace of $\mathbb{R}^m$. Denote by $\mathcal{P}_m'$ the space of $m\times m$ real symmetric positive semidefinite matrices. If $C\in\mathcal{S}_m$ and $A-C\in\mathcal{P}_m'$, $v^TCv=v^TAv-v^T(A-C)v\leq v^TAv$, so by (\ref{minimax}),
\begin{equation}\begin{aligned}\label{ineq}
\alpha_j(C)\leq\alpha_j(A).
\end{aligned}\end{equation}

Since $W$ is invertible, $\{w_1,\ldots,w_m\}$ is indeed a basis for $\mathbb{R}^m$. For any $j$-dimensional subspace $\mathcal{T}$ of $\mathbb{R}^m$, 
\begin{align*}
&\dim(\mathcal{T} \cap \{w_1, \dots, w_{j-1}\}^\perp) \\
=& \dim \mathcal{T} + \dim \{w_1, \dots, w_{j-1}\}^\perp - \dim(\mathcal{T} + \{w_1, \dots, w_{j-1}\}^\perp) \\
\geq& j + m - (j - 1) - m \\
=& 1,
\end{align*}
where $\{w_1, \dots, w_{j-1}\}^\perp$ is the orthogonal complement of the span of $w_1,\ldots, w_{j-1}$. Thus there exists a unit vector $u_\mathcal{T}$ in $\mathcal{T} \cap \{w_1, \dots, w_{j-1}\}^\perp$. 

Set $s$ to be the unique value in $1,\ldots,S-1$ for which $n_{s-1}<j\leq n_s$. Taking the aforementioned $u_{\mathcal{T}}$ gives
\begin{equation}\begin{aligned} \label{ub}
    \alpha_j(A_s(t))&\leq\max_{\dim(\mathcal{T})=j}u_\mathcal{T}^TA_s(t)u_\mathcal{T} \\
    &=\max_{\dim(\mathcal{T})=j}\sum_{i=j}^{n_s}(u_\mathcal{T}^Tw_i)^2 \\
    &\leq \max_{\dim(\mathcal{T})=j}\sum_{i=j}^{n_s}(w_i^Tw_i)(u_\mathcal{T}^Tu_\mathcal{T}) \\
    &=\sum_{i=j}^{n_s}w_i^Tw_i
\end{aligned}\end{equation}
by (\ref{minimax}) and the Cauchy-Schwarz inequality. $A_s(t)-C_s\in\mathcal{P}_m'$ and (\ref{ineq}) holds; since $C_s$ has rank $n_s\geq j$, $\alpha_j(C_s)>0$. Then (\ref{weyl}), (\ref{ub}), and (\ref{ineq}) imply
\begin{equation}\begin{aligned}\label{bounds}
    \alpha_j(A_s(t)+B_s(t))\in\bigg[\alpha_j(C_s)+\alpha_m(B_s(t)),\sum_{i=j}^{n_s}w_i^Tw_i+\alpha_1(B_s(t))\bigg],
\end{aligned}\end{equation}
and because 
\begin{equation}\begin{aligned}\label{limB}
\lim_{t\rightarrow\infty}B_s(t)=0
\end{aligned}\end{equation}
and $\alpha_j(C_s)$ and $\sum_{i=j}^{n_s}w_i^Tw_i$ are finite positive constants independent of $t$, the $t$th root of both bounds in this interval converges to 1 as $t\rightarrow\infty$. Thus, for each $j$, 
\begin{equation}\begin{aligned}\label{evals}
    \lim_{t\rightarrow\infty}\alpha_j(H(t))=\exp(d_j)\lim_{t\rightarrow\infty}(\alpha_j(A_s(t)+B_s(t)))^{1/t}=\exp(d_j).
\end{aligned}\end{equation}
\end{proof}

For $A,B\in\mathcal{S}_m$ and $a,b,\delta\in\mathbb{R}$, let $E$ be a matrix whose columns constitute an orthonormal basis for the eigenspace of $A$ associated with the eigenvalues contained in $(a,b)$, and let $L$ be a matrix whose columns constitute an orthonormal basis for the eigenspace of $A+B$ associated with the eigenvalues contained in $\mathbb{R}\backslash(a-\delta,b+\delta)$. Recall the Davis--Kahan $\sin(\Theta)$ theorem (see Section VII.3 of \cite{Bhatia1996}) which states that
\begin{equation}\begin{aligned}\label{davis}
    \lVert L^TE\rVert_F\leq\frac{\lVert B\rVert_F}{\delta};
\end{aligned}\end{equation}
the norm can be any unitarily invariant norm, the Frobenius norm being one such example. 

In this paragraph, we will give a high-level overview of the proof of \ref{mainlemma}(b). The Davis-Kahan $\sin(\Theta)$ theorem can be used to show that for any $s$, the eigenspace of $H(t)$ corresponding to $\alpha_{1}(H(t)),\ldots,\alpha_{n_s}(H(t))$ converges in some sense to the eigenspace of $A_s(t)$ corresponding to non-zero eigenvalues, which is equivalently the span of $w_1,\ldots,w_{n_s}$ or of $u_1,\ldots,u_{n_s}$; this exploits the fact that $H(t)$ and $A_s(t)+B_s(t)$ have the exact same eigenvectors associated with corresponding eigenvalues thanks to (\ref{spdroot}). Then it can be shown that the eigenspace of $H(t)$ corresponding to $\alpha_{n_{s-1}+1}(H(t)),\ldots,\alpha_{n_s}(H(t))$ converges to the span of $u_{n_{s-1}+1},...,u_{n_s}$.

\begin{proof}[Proof of Lemma \ref{mainlemma}(b)]
Letting $a=\alpha_{n_s}(C_s)/2>0$, $b=\sum_{i=1}^{n_s}w_i^Tw_i+1<\infty$ and $\delta=\alpha_{n_s}(C_s)/4$, then $\alpha_i(A_s(t))\in(a,b)$ precisely when $i=1,\ldots,n_s$, thanks to (\ref{ineq}), (\ref{ub}), and the fact that $m-n_s$ of the eigenvalues of $A_s(t)$ are 0 because $A_s(t)$ has rank $n_s$. The eigenspace associated with these eigenvalues is precisely the span of $\{w_1,\ldots,w_{n_s}\}$, and so we can let $E$ in (\ref{davis}) be
\begin{align*}
E_s:=[u_1,\ldots,u_{n_s}],
\end{align*}
$(s=1,\ldots,S)$.

Denote an ordered orthonormal basis of $\mathbb{R}^m$ of eigenvectors of $A_s(t)+B_s(t)$ by $y_{1}(t),\ldots,y_{m}(t)$. For any $r$ and $s$ satisfying $r\in\{0,\ldots,S-1\}$ and $r<s$, define $K_{r,s}$ by $K_{r,s}(t):=[y_{{n_{r}}+1}(t),\ldots,y_{n_{s}}(t)]$. For any $s$, define $L_s$ by $L_s(t):=[y_{n_s+1}(t),\ldots,y_{m}(t)]$. $H(t)$ and $A_s(t)+B_s(t)$ have the exact same eigenvectors so each $y_i(t)$ does not depend on $s$. For $i$ in $\{n_s+1,\ldots,m\}$, letting $s'$ be the unique integer for which $n_{s'-1}<i\leq n_{s'}$, $\alpha_i(A_s(t)+B_s(t))=\exp(t(d_{i}-d_{n_s}))\alpha_i(A_{s'}(t)+B_{s'}(t))\rightarrow 0$ as $t\rightarrow \infty$ by (\ref{bounds}) and (\ref{limB}) since $d_{i}<d_{n_s}$. Therefore, $\alpha_{n_s+1}(A_s(t)+B_s(t)),\ldots,\alpha_m(A_s(t)+B_s(t))\in\mathbb{R}\backslash(a-\delta,b+\delta)$ for sufficiently large $t$ and we can choose $L$ in (\ref{davis}) such that $y_{n_s+1}(t),\ldots,y_{m}(t)$ are among its columns.

Recall that for any matrix $X$ with linearly independent columns, $X^TX$ is invertible and the projection matrix, defined as $X(X^TX)^{-1}X^T$, projects a vector into the column space of $X$; in particular, if the columns of $X$ are also orthogonal, the projection matrix is $XX^T$. For any $i$ and $s$, define $v_{s,i}(t):=K_{0,s}(t)K_{0,s}(t)^Tu_i$, the projection of $u_i$ onto the span of $y_1(t),\ldots,y_{n_s}(t)$. If $i\in\{1,\ldots,n_s\}$, $v_{s,i}(t)$ satisfies
\begin{equation}\begin{aligned} \label{ui}
    \lVert u_i-v_{s,i}(t)\rVert_2&=\lVert(I-K_{0,s}(t)K_{0,s}(t)^T)u_i\rVert_2  \\
    &=\lVert L_s(t)L_s(t)^Tu_i\rVert_2 \\
    &=(u_i^TL_s(t)L_s(t)^TL_s(t)L_s(t)^Tu_i)^{1/2} \\
    &=\lVert L_s(t)^Tu_i\rVert_2 \\
    &\rightarrow 0 
\end{aligned}\end{equation}
as $t\rightarrow\infty$ by (\ref{limB}) and (\ref{davis}) since $u_i$ is a column of $E_s$. For any $s$, define $V_s$ by $V_s(t):=[v_{s,1}(t),\ldots,v_{s,n_s}(t)]$. The columns of $V_s(t)$ form a basis of the column space of $K_{0,s}(t)$ when $t$ is sufficiently large because (\ref{ui}) ensures that they are eventually linearly independent. Thus for sufficiently large $t$ the projection matrices of $V_{s}(t)$ and $K_{0,s}(t)$ are equal, and therefore (\ref{ui}) implies that for all $s$,
\begin{align*}
    \lim_{t\rightarrow\infty}K_{0,s}(t)K_{0,s}(t)^T&=\lim_{t\rightarrow\infty}V_{s}(t)(V_{s}(t)^TV_{s}(t))^{-1}V_{s}(t)^T \\
    &=E_{s}(E_{s}^TE_{s})^{-1}E_{s}^T \\
    &=E_{s}E_{s}^T.
\end{align*}
This means that for any $s=2,\ldots,S$, if $i\in\{n_{s-1},\ldots,m\}$, $v_{s-1,i}(t)=K_{0,s-1}(t)K_{0,s-1}(t)^Tu_i\rightarrow E_{s-1}E_{s-1}^Tu_i=0$ as $t\rightarrow\infty$. Also, for the same values of $s$, 
\begin{align*}
v_{s,i}(t)&=K_{0,s}(t)K_{0,s}(t)^Tu_i \\
&=\begin{bmatrix}K_{0,s-1}(t) & K_{s-1,s}(t)\end{bmatrix}\begin{bmatrix}K_{0,s-1}(t)^T  \\ K_{s-1,s}(t)^T\end{bmatrix}u_i \\
&=v_{s-1,i}(t)+K_{s-1,s}(t)K_{s-1,s}(t)^Tu_i.
\end{align*}
So for any $s=2,\ldots,S$ (the $s=1$ case will not be needed), if $l\in\{1,\ldots,n_{s-1}\}$ and $i\in\{n_{s-1}+1,\ldots,n_s\}$,
\begin{equation}\begin{aligned}\label{vsi1}
    \lvert\langle v_{s,i}(t),y_l(t)\rangle_2\rvert&\leq\lvert\langle v_{s-1,i}(t),y_l(t)\rangle_2\rvert+\lvert u_i^TK_{s-1,s}(t)K_{s-1,s}(t)^Ty_l(t)\rvert \\
    &\leq\lVert v_{s-1,i}(t)\rVert_2+0 \\
    &\rightarrow 0
\end{aligned}\end{equation}
as $t\rightarrow\infty$. In addition, for any $s$,
\begin{equation}\begin{aligned}\label{vsi2}
    \langle v_{s,i}(t),y_l(t)\rangle_2=0
\end{aligned}\end{equation}
when $l\in\{n_s+1,\ldots,m\}$ because $v_{s,i}(t)$ is in the span of $y_1(t),\ldots,y_{n_s}(t)$. So, keeping (\ref{evals}), (\ref{ui}), (\ref{vsi1}) and (\ref{vsi2}) in mind, for any $s$ and $i=n_{s-1}+1,\ldots,n_s$,
\begin{align*}
    \lim_{t\rightarrow\infty} \big(H(t)u_i\big)&=\lim_{t\rightarrow\infty} H(t)\bigg(\sum_{l=1}^{m}\langle u_i,y_{l}(t)\rangle_2 y_{l}(t)\bigg) \\
    &=\lim_{t\rightarrow\infty}\bigg(\sum_{l=1}^{m}\alpha_l(H(t))\langle u_i,y_{l}(t)\rangle_2y_{l}(t)\bigg) \\
&=\lim_{t\rightarrow\infty}\bigg(\sum_{l=n_{s-1}+1}^{n_s}\exp(d_{n_s})\langle v_{s,i}(t),y_{l}(t)\rangle_2y_{l}(t)\bigg) \\
    &\qquad+\lim_{t\rightarrow\infty}\bigg(\sum_{l=n_{s-1}+1}^{n_s}[\alpha_l(H(t))-\exp(d_{n_s})]\langle v_{s,i}(t),y_{l}(t)\rangle_2y_{l}(t)\bigg) \\
    &\qquad+\lim_{t\rightarrow\infty}\bigg(\sum_{l=1}^{n_{s-1}}\alpha_l(H(t))\langle v_{s,i}(t),y_{l}(t)\rangle_2y_{l}(t)\bigg) \\&\qquad+\lim_{t\rightarrow\infty}\bigg(\sum_{l=n_s+1}^{m}\alpha_l(H(t))\langle v_{s,i}(t),y_{l}(t)\rangle_2y_{l}(t)\bigg) \\
&\qquad+\lim_{t\rightarrow\infty}\bigg(\sum_{l=1}^{m}\alpha_l(H(t))\langle u_i-v_{s,i}(t),y_{l}(t)\rangle_2y_{l}(t)\bigg) \\
&=\exp(d_{n_s})\lim_{t\rightarrow\infty}v_{s,i}(t) \\
&=\exp(d_{n_s})u_i.
\end{align*}
\end{proof}

Notice that at no point in the proofs of Lemma \ref{mainlemma}(a) and (b) is $\lim_{t\rightarrow\infty}H(t)$ assumed to exist, and in fact we will use (b) to prove that it exists and that its eigenvalues and eigenvectors are those suggested by (a) and (b).

\begin{proof}[Proof of Lemma \ref{mainlemma}(c)]
    Defining $U:=[u_1,\ldots,u_m]$, (b) shows that
    \begin{align*}
        \lim_{t\rightarrow\infty}H(t)=\lim_{t\rightarrow\infty}\big(H(t)U\big)U^{-1}=U\mathrm{diag}(\exp(d_1),\ldots,\exp(d_m))U^{-1}.
    \end{align*}
\end{proof}

Lemma \ref{mainlemma} admits the following generalization, which does not require the given linearly independent vectors to span the entire ambient Euclidean space; this result is not needed in our main proof, but is included for the sake of general interest.

\begin{proposition}\label{prop}
    For real $d_1\geq\ldots\geq d_{m}$ and linearly independent $w_1,\ldots,w_{m}\in\mathbb{R}^{m'}$, where $m'\geq m$, let $u_1,\ldots,u_{m}$ be an orthonormal set that results from applying the Gram-Schmidt orthonormalization process to $w_1,\ldots,w_{m}$, and $u_{m'+1},\ldots,u_{m'}$ be an orthonormal basis for the orthogonal complement of the span of $w_1,\ldots,w_{m}$. Then the matrix
\begin{align*}
\lim_{t\rightarrow\infty}\bigg[\sum_{i=1}^{m}e^{td_i}w_iw_i^T\bigg]^{1/t} 
\end{align*}
exists, has eigenvalues $\exp(d_1),\ldots,\exp(d_{m}),0,\ldots,0$, where $m'-m$ of the eigenvalues are 0, and has corresponding orthonormal eigenvectors $u_1,\ldots,u_{m},u_{m+1},\ldots,u_{m'}$.
\end{proposition}
\begin{proof}
    Define the matrices $U_1:=[u_1,\ldots,u_m]$ and $U_2:=[u_1,\ldots,u_m,u_{m+1},\ldots,u_{m'}]$. For any positive integers $k,l$, define $O_{kl}$ to be the $k\times l$ zero matrix. 
    
    Clearly $\sum_{i=1}^{m}e^{td_i}w_iw_i^T$ is symmetric positive semidefinite, so $U_2^T[\sum_{i=1}^{m}\exp(td_i)w_iw_i^T]U_2$ is as well, and therefore it has a unique $t$th root that is symmetric positive semidefinite. Because 
    \begin{align*}
    \bigg(U_2^T\bigg[\sum_{i=1}^{m}e^{td_i}w_iw_i^T\bigg]^{1/t}U_2\bigg)^t=U_2^T\bigg[\sum_{i=1}^{m}e^{td_i}w_iw_i^T\bigg]U_2,
    \end{align*}
    this unique $t$th root is given by
    \begin{align}\label{tthroot}
        \bigg[\sum_{i=1}^{m}e^{td_i}(U_2^Tw_i)(U_2^Tw_i)^T\bigg]^{1/t}=U_2^T\bigg[\sum_{i=1}^{m}e^{td_i}w_iw_i^T\bigg]^{1/t}U_2.
    \end{align}
    
    Recall that any vector $v\in\mathbb{R}^{m'}$ can be can be expressed with respect to the basis $u_1,\ldots,u_{m'}$ as $U_2^Tv$, and notice that if $v$ is in the span of $w_1,\ldots,w_m$, the last $m'-m$ entries of $U_2^Tv$ are zero; hence 
    \begin{align}\label{orth}
    U_2^Tv=\begin{bmatrix}U_1^Tv \\ 0 \\ \vdots \\0\end{bmatrix},
    \end{align}
    ending with $m'-m$ zeroes. So for any $v_1,v_2$ in the span of $w_1,\ldots,w_m$,
    \begin{align*}
    \langle v_1,v_2\rangle=v_1^TU_2U_2^Tv_2=\begin{bmatrix}v_1^TU_1 & 0 & \ldots & 0 \end{bmatrix}\begin{bmatrix}U_1^Tv_2 \\ 0 \\ \vdots \\0\end{bmatrix}=v_1^TU_1U_1^Tv_2=\langle U_1^Tv_1,U_1^Tv_2\rangle,
    \end{align*}
    so the Gram-Schmidt orthonormalization of $U_1^Tw_1,\ldots,U_1^Tw_m$ is $U_1^Tu_1,\ldots,U_1^Tu_m$, that is, the standard $m$-dimensional basis $e_1,\ldots,e_m$. Therefore, by Lemma \ref{mainlemma}(c), the matrix $\lim_{t\rightarrow\infty}[\sum_{i=1}^m\exp(td_i)(U_1^Tw_i)(U_1^Tw_i)^T]^{1/t}$ exists, has eigenvalues $\exp(d_1),\ldots,\exp(d_m)$, and corresponding eigenvectors $e_1,\ldots,e_m$. Thus, using (\ref{tthroot}) and letting $v=w_i$ in (\ref{orth}) gives
    \begin{align*}
        &\lim_{t\rightarrow\infty}\bigg[\sum_{i=1}^me^{td_i}w_iw_i^T\bigg]^{1/t} \\
        =&U_2\bigg(\lim_{t\rightarrow\infty}U_2^T\bigg[\sum_{i=1}^me^{td_i}w_iw_i^T\bigg]^{1/t}U_2\bigg)U_2^T \\
        =&U_2\bigg(\lim_{t\rightarrow\infty}\bigg[\sum_{i=1}^{m}e^{td_i}(U_2^Tw_i)(U_2^Tw_i)^T\bigg]^{1/t}\bigg)U_2^T \\
        =&U_2\begin{bmatrix}\lim_{t\rightarrow\infty}[\sum_{i=1}^m\exp(td_i)(U_1^Tw_i)(U_1^Tw_i)^T]^{1/t} & O_{m,m'-m} \\
        O_{m'-m,m} & O_{m'-m,m'-m}\end{bmatrix}U_2^T \\
        =&U_2\begin{bmatrix}\mathrm{diag}(\exp(d_1),\ldots,\exp(d_m)) & O_{m,m'-m} \\
        O_{m'-m,m} & O_{m'-m,m'-m}\end{bmatrix}U_2^T \\
        =&U_2\mathrm{diag}(\exp(d_1),\ldots,\exp(d_m),0,\ldots,0)U_2^T.
    \end{align*}
\end{proof}

We now prove our main result.

\begin{proof}[Proof of Theorem \ref{main}]
    The limit $\lim_{t\rightarrow\infty}d(x,\exp_p(t\xi_p))/t=1$ since 
    \begin{align*}
    (d(\exp_p(t\xi_p),p)-d(x,p))/t\leq d(x,\exp_p(t\xi_p))/t\leq (d(\exp_p(t\xi_p),p)+d(x,p))/t
    \end{align*}
    for positive $t$ by the triangle equality, so
\begin{equation}\begin{aligned} \label{back}
    \xi_x&=\lim_{t\rightarrow\infty}\frac{\log_x(\exp_p(t\xi_p))}{d(x,\exp_p(t\xi_p))} \\
    &=\lim_{t\rightarrow\infty}\frac{x^{1/2}\mathrm{Log}(x^{-1/2}p^{1/2}\mathrm{Exp}(tp^{-1/2}\xi_p p^{-1/2})p^{1/2}x^{-1/2})x^{1/2}}{t} \\
    &=x^{1/2}(\lim_{t\rightarrow\infty}\mathrm{Log}\big([x^{-1/2}p^{1/2}\mathrm{Exp}(tp^{-1/2}\xi_p p^{-1/2})p^{1/2}x^{-1/2}]^{1/t})\big)x^{1/2} \\
    &=x^{1/2}\mathrm{Log}\bigg(\lim_{t\rightarrow\infty}[x^{-1/2}p^{1/2}\mathrm{Exp}(tp^{-1/2}\xi_p p^{-1/2})p^{1/2}x^{-1/2}]^{1/t}\bigg)x^{1/2} \\
    &=x^{1/2}\mathrm{Log}\bigg(\lim_{t\rightarrow\infty}[x^{-1/2}p^{1/2}V\mathrm{Exp}(tD)V^Tp^{1/2}x^{-1/2}]^{1/t}\bigg)x^{1/2} \\
    &=x^{1/2}\mathrm{Log}\bigg(\lim_{t\rightarrow\infty}\bigg[\sum_{i=1}^me^{td_i}w_iw_i^T\bigg]^{1/t}\bigg)x^{1/2};
\end{aligned}\end{equation}
the first equality follows from (\ref{xi}), and the limit in the fourth exists because it must equal $\mathrm{Exp}(x^{-1/2}\xi_xx^{-1/2})$ by the continuity of $\mathrm{Exp}$; alternatively, it exists because it must equal the limit in the last line, which exists by Lemma \ref{mainlemma}(c). The result follows immediately by the same result.
\end{proof}

One may wonder whether the radial fields are smooth on Hadamard manifolds. In fact, though they are known to be $C^1$ (see Proposition 3.1 of \cite{Heintze1977}) they are not guaranteed to even be $C^2$. \cite{Green1974} and \cite{Shcherbakov1983} provide some conditions on the curvature of the manifold under which twice continuous differentiability can be guaranteed, but since they require the supremum of the sectional curvatures to be less than 0, these results do not apply to $\mathcal{P}_m$. However, we can show that the radial fields are, in fact, smooth in $\mathcal{P}_m$, just as \cite{Shin2023} did in hyperbolic spaces.

\begin{corollary}
    The radial fields on $\mathcal{P}_m$ are smooth.
\end{corollary}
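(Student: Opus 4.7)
The plan is to observe that the closed-form expression $\xi_x = x^{1/2} U(x) D U(x)^T x^{1/2}$ established in Theorem \ref{main} exhibits $\xi_x$ as a composition of smooth operations on $x$. For a fixed radial field, that is, for fixed $p \in \mathcal{P}_m$ and $\xi \in \partial \mathcal{P}_m$, the matrices $V$ and $D$ appearing in the theorem are constants depending only on $p$ and $\xi$; only the $x$-dependent pieces $x \mapsto x^{\pm 1/2}$ and the Gram-Schmidt step $W(x) \mapsto U(x)$, where $W(x) = x^{-1/2} p^{1/2} V$, remain to be shown smooth.

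First I would verify that the SPD square root $f : \mathcal{P}_m \to \mathcal{P}_m$, $f(A) = A^{1/2}$, is smooth. Its inverse $A \mapsto A^2$ is a smooth self-map of $\mathcal{P}_m$ whose differential at $A$ is the Lyapunov operator $h \mapsto Ah + hA$; diagonalizing $A = Q\Lambda Q^T$ transforms this into $(h'_{ij}) \mapsto ((\lambda_i + \lambda_j) h'_{ij})$, which is invertible because every $\lambda_i > 0$. The inverse function theorem then yields smoothness of $f$, and since inversion is smooth on $GL(m,\mathbb{R})$, the map $x \mapsto x^{-1/2}$ is also smooth. Therefore $W(x) = x^{-1/2} p^{1/2} V$ is a smooth $GL(m,\mathbb{R})$-valued function of $x$, its columns being linearly independent because each of the three factors is invertible.

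Finally I would argue that the Gram-Schmidt orthonormalization is smooth as a map from $GL(m,\mathbb{R})$ to $O(m)$. Writing out the recursion, each $u_i$ is obtained from $w_1, \ldots, w_i$ by subtracting linear combinations of the previously computed orthonormal vectors with coefficients that are inner products, and then normalizing by a Euclidean norm; every denominator is a norm of a nonzero vector, guaranteed nonzero by linear independence of the columns of $W(x)$, so every step is a smooth function of its inputs. Hence $U(x)$ is smooth in $x$, and $\xi_x = x^{1/2} U(x) D U(x)^T x^{1/2}$ is a smooth $\mathcal{S}_m$-valued function of $x$. Because $\mathcal{P}_m$ is open in $\mathcal{S}_m$ and $T_x \mathcal{P}_m \cong \mathcal{S}_m$ canonically at each $x$, smoothness of this matrix-valued map is equivalent to smoothness of the radial vector field. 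The main obstacle was already overcome in Theorem \ref{main}; here the only point that deserves care is ensuring that the columns of $W(x)$ remain linearly independent for every $x \in \mathcal{P}_m$, which keeps Gram-Schmidt safely away from its singular locus.
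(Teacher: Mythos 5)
Your argument is correct and follows essentially the same route as the paper: treat $V$ and $D$ as constants for the fixed radial field, and obtain smoothness of $x\mapsto \xi_x$ by composing the smooth maps $x\mapsto x^{\pm 1/2}$, $x\mapsto W(x)$, and the Gram--Schmidt map $W\mapsto U$ on $\mathrm{GL}_m(\mathbb{R})$, where nondegeneracy of the denominators is guaranteed by invertibility of $W(x)$. Your inverse-function-theorem justification of the smoothness of the SPD square root is a detail the paper leaves implicit, but it does not change the approach.
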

\begin{proof}
    Because $z\mapsto z/\lVert z\rVert_2$ on $\mathbb{R}^m\backslash\{0\}\rightarrow \mathbb{R}^m\backslash\{0\}$ is smooth, $W\mapsto U$ defined on $\text{GL}_m(\mathbb{R})\rightarrow \text{GL}_m(\mathbb{R})$, which is diffeomorphic to an open subset of $\mathbb{R}^{m^2}\backslash\{0\}$, is also smooth. The map $z\mapsto z^{1/2}$ on $\mathcal{P}_m\rightarrow \mathcal{P}_m$, diffeomorphic to an open subset of $\mathbb{R}^{m(m+1)/2}\backslash\{0\}$, is also smooth, and therefore so is $x\mapsto W$ on $\mathcal{P}_m\rightarrow\text{GL}_m(\mathbb{R})$. Then, the smoothness of the map $x\mapsto \xi_x$ on $\mathcal{P}_m\rightarrow\mathcal{S}_m\cong \mathbb{R}^{m(m+1)/2}$ follows from Theorem \ref{main}.
\end{proof}

This smoothness is important because, for example, it means that the joint asymptotic normality of quantiles of Theorem 4.2, Corollary 4.1 and Proposition 4.2 of \cite{Shin2023} can be applied to quantiles on $\mathcal{P}_m$, and that the gradient of the quantile loss functions in that space can also be calculated using Jacobi fields as in hyperbolic spaces.

	\section{Concluding remarks}
	
	As detailed in the introduction, radial fields have the potential to generalize many statistical techniques to Hadamard manifolds by defining a canonical sense of direction. The results of this paper, namely an expression for the radial fields on $\mathcal{P}_m$, among the most commonly encountered Hadamard manifolds, and the smoothness of these fields, should be of great use to researchers looking to apply these techniques to $\mathcal{P}_m$.
	
	\section*{Acknowledgments} 
		I would like to thank my advisor Hee-Seok Oh for his helpful comments on this manuscript. 

    \section*{Funding}
        This research was supported by the G-LAMP Program of the National Research Foundation of Korea (NRF) grant funded by the Ministry of Education (No. RS-2025-25441317).

 \bibliographystyle{apalike}
\bibliography{references}

\end{document}